% !TeX spellcheck = en_US
\documentclass[12pt,reqno]{amsart}
\usepackage{amssymb}
\usepackage{eurosym}
\usepackage{epsfig}
\usepackage{amsmath}
\usepackage{amsfonts}
\usepackage{pgf,tikz}
\usepackage{enumerate}
\usepackage{cite}
\usepackage[colorlinks=true, linkcolor=black, citecolor=black, urlcolor=black]{hyperref}

 \usepackage{color}

\textheight=8.5in \textwidth=6.4in 

\oddsidemargin=0.15in
\evensidemargin=0.10in

\setlength{\arraycolsep}{0.6mm}

\linespread{1.3}
\newtheorem{theorem}{\sc Theorem}[section]

\newtheorem{remark}[theorem]{\sc Remark}

\newtheorem{example}[theorem]{\sc Example}

\def\qed{\hbox to 0pt{}\hfill$\rlap{$\sqcap$}\sqcup$\medbreak}

\numberwithin{equation}{section}

\begin{document}

\title[Birkhoff--Kellogg type results]{Birkhoff--Kellogg type results in product spaces and their application to differential systems}

\author[A. Calamai]{Alessandro Calamai}
\address{Alessandro Calamai, 
Dipartimento di Ingegneria Civile, Edile e Architettura,
Universit\`{a} Politecnica delle Marche
Via Brecce Bianche
I-60131 Ancona, Italy}%
\email{a.calamai@univpm.it}%

	\author[G. Infante]{Gennaro Infante}
	\address{Gennaro Infante, Dipartimento di Matematica e Informatica, Universit\`{a} della
		Calabria, 87036 Arcavacata di Rende, Cosenza, Italy}%
	\email{gennaro.infante@unical.it}%
	
	\author[J. Rodr\'iguez--L\'opez]{Jorge Rodr\'iguez--L\'opez}
	\address{Jorge Rodr\'iguez--L\'opez, CITMAga \& Departamento de Estat\'{\i}stica, An\'alise Matem\'atica e Optimizaci\'on, Universidade de Santiago de Compostela,  15782, Facultade de Matem\'aticas, Campus Vida, Santiago, Spain}
	\email{jorgerodriguez.lopez@usc.es}%

\date{}

\begin{abstract} 
We provide a new version of the well-known Birkhoff-Kellogg invariant-direction Theorem in product spaces.
Our results concern operator systems and give the existence of component-wise eigenvalues, instead of scalar eigenvalues as in the classical case, 
that have corresponding eigenvectors with all components nontrivial and localized by their norm.
We also show that, when applied to nonlinear eigenvalue problems for differential equations,
this localization property of the eigenvectors provides, in turn, qualitative properties of the solutions.
This is illustrated in two contexts of systems of PDEs and ODEs. We show the applicability of our theoretical results with two explicit examples.
\end{abstract}

\subjclass[2020]{Primary 47H10, secondary 34B08, 35J57, 45G15, 47H11}

\keywords{Birkhoff–Kellogg type result, nonlinear eigenvalue problem, operator system, nontrivial solution}

 \maketitle

\section{Introduction}
A celebrated result in Nonlinear Analysis is the Birkhoff-Kellogg invariant-direction Theorem~\cite{B-K-1922}.
This theorem has been object of extensive research in the past 
and finds significant applications in the study of nonlinear eigenvalue problems in infinite-dimensional normed linear spaces, see for example the books~\cite{ADV, guolak, Krasno}, the recent papers \cite{CI_MMAS, gi-BK}, and references therein.
In the version by Krasnosel'ski\u{\i} and Lady\v{z}enski\u{\i}~\cite{Kra-Lady}, 
a similar result is set in cones of real Banach spaces, yielding the existence of a pair, constituted by a positive eigenvalue and an eigenvector, the latter localized inside a cone.
We stress that a notable, common feature of the two above mentioned theorems is that they provide a localization of the eigenfunction; this localization, in turn, provides qualitative properties of the solution in the context of applications to nonlinear eigenvalue problems for differential equations.
In the framework of systems the situation is somewhat more delicate. In fact, a direct application of one of the previous two results in product spaces would provide the existence of an eigenvalue with a corresponding (vectorial) eigenfunction that may have trivial components; this issue has been been highlighted for example in~\cite[Definition 1]{gi-BK}.

In this paper we present a new version of the Birkhoff--Kellogg Theorem in product spaces, which, instead of a scalar eigenvalue, provides the existence of \textit{component-wise} eigenvalues
 that have corresponding eigenvectors with all components nontrivial and localized by their norm.
 In the context of  systems of integral equations and their applications, component-wise eigenvalues have been investigated in Chapter 3 of the book~\cite{ROW}, where the authors sought constant-sign eigenvectors by means of topological tools such as the Schauder and  the  Krasnosel'ski\u{\i}-Guo fixed point theorems, and in the book~\cite{HL} where the authors sought, also via topological fixed point theory, 
the existence of  positive eigenvectors. More in details,  we study systems of type
	\begin{equation*} \label{sys_intro}
		\left\{\begin{array}{l} x=\lambda_1\, T_1(x,y), \\ y=\lambda_2\,T_2(x,y), \end{array} \right.
	\end{equation*}
where $T=(T_1,T_2)$ is a suitable compact operator acting on the Cartesian product of two sets $C_1$ and $C_2$, which can be as follows:
\begin{enumerate}
	\item $C_1$ and $C_2$ are cones; \label{uno}
	\item $C_1$ is a cone and $C_2=X_2$ is a infinite dimensional normed space; \label{due}
	\item both $C_1=X_1$ and $C_2=X_2$ are infinite dimensional normed spaces. \label{tre}
\end{enumerate}

In the context of fixed point theory, a component-wise approach has been utilized in the past, see for example~\cite{Avramescu, Benedetti, imap, InMaRo, Perov, PrecupFPT}; here we develop a somewhat analogue theory in the framework of nonlinear spectral theory. In particular, 
  we give quite natural conditions yielding the existence of component-wise eigenvalues $\lambda_1,\lambda_2>0$ and corresponding eigenfunctions $x_0,y_0$, both of prescribed non-zero norm.
We also provide further results in the settings~\eqref{due} and \eqref{tre}, which yield the additional existence of negative eigenvalues with corresponding eigenfunctions. Furthermore we present a direct application of the classical 
result by Krasnosel'ski\u{\i} and Lady\v{z}enski\u{\i} to the context of systems and present a comparison between the two approaches, see Remark~\ref{comparison}. 

Section 3 of the paper is devoted to applications of the theoretical results to differential systems. We stress that 
these applications are given as a motivation for the theoretical results and are provided for illustrative purposes. In order to keep the paper as readable as possible, we study these problems in the case of continuous nonlinearities.

We firstly focus on the systems of PDEs 
\begin{equation}\label{sys_elliptic-intro}
	\left\{\begin{array}{ll} -\Delta u=\lambda_1 f(x,u,v), & \ \text{ in } \Omega, \\ -\Delta v=\lambda_2 g(x,u,v), & \ \text{ in } \Omega, \\ u=v=0, & \ \text{ on } \partial\,\Omega,  \end{array} \right.
\end{equation}
where $\Omega\subset\mathbb{R}^n$ denotes the open unit ball in $\mathbb{R}^n$ and $f,g$ are suitable non-negative continuous functions. 
Problems of this type are well-studied, both in the classical case of the spectral problem, that is with $\lambda=\lambda_1=\lambda_2$,
see for example~\cite{CCI14, cui3, imap, ma2},
and in the case of component-wise eigenvalues with possibly different $\lambda_1,\lambda_2$,
see for instance~\cite{chzh, hai, lan2, lan2021} and references therein. 
Within this framework, we utilize the cone product setting \eqref{uno}; in Theorem~\ref{th_PDE_exis}
we provide sufficient conditions for \eqref{sys_elliptic-intro} that ensure the existence of a pair of positive 
component-wise eigenvalues with eigenfunctions possessing nontrivial components with localized norms.
The result is then illustrated in a specific example.

Our second setting of application is   the following BVP for a system of ODEs
\begin{equation}\label{syst_ode-intro}
\left\{	\begin{array}{l}
		u''(t)+\lambda_1 f(t,u(t),v(t))=0, \ t\in[0,1], \\
		v''(t)+\lambda_2 g(t,u(t),v(t))=0, \ t\in[0,1],\\
		u'(0)=u(1)+u'(1)=0,\\ v'(0)=v(1)-\frac12 v'(1)=0.	
\end{array}
\right.
\end{equation}
where $f,g$ are suitable non-negative continuous functions. The BCs that occur in~\eqref{syst_ode-intro} have been investigated for a different set of parameters by Lan~\cite{Lan, lan2021}. 
We rewrite the system~\eqref{syst_ode-intro} in terms of a system of Hammerstein integral equations. Since one of the two corresponding kernels changes sign,  we use the setting \eqref{due},
namely the Cartesian product of a conical shell times a ball in the space of continuous  functions.
In this case our approach yields the existence of two distinct pairs of component-wise eigenvalues with nontrivial eigenfunctions possessing nontrivial components;
this is illustrated in a toy model as well.

Overall results are new, from both the theoretical and the applied point of view, and complement the ones in~\cite{ROW, ADV, CCI14, chzh, cui3, hai, HL, gi-BK, imap, lan2, lan2021, ma2, PrecupFPT}.

\section{Birkhoff-Kellogg type results}
We begin this Section by recalling the classical Birkhoff-Kellogg invariant-direction Theorem~\cite{B-K-1922}, cf.~\cite[Theorem~6.1]{GD}.

\begin{theorem}%[Birkhoff-Kellogg]%\label{B-K-1922}
Let $U$ be a bounded open neighbourhood of $0$ in an infinite-dimensional normed linear space $(V, \|\ \|)$, and let $T:\partial U \to V$ be a compact map satisfying $\|T(x)\| \geq \alpha$ for some $\alpha > 0$ for every $x$ in $\partial U$. Then there exist $x_0 \in \partial U$ and $ \lambda_{0} \in (0,+\infty)$ such that 
$x_0 = \lambda_{0} T(x_0).$
\end{theorem}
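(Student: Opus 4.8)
The plan is to argue by contradiction: assume that $x \neq \lambda\, T(x)$ for every $x \in \partial U$ and every $\lambda \in (0,+\infty)$, and to deduce a contradiction with Schauder's fixed point theorem. It is convenient to reduce first to the case in which $U$ is the open unit ball $B$, so that $\partial U = S$, the unit sphere. Indeed, for the star-shaped sets that occur in practice --- in particular, for balls --- the Minkowski gauge $p_U$ is continuous, and $\varphi(x) := \bigl(\|x\|/p_U(x)\bigr)\,x$, with $\varphi(0) = 0$, is a homeomorphism of $\overline{B}$ onto $\overline{U}$ carrying $S$ onto $\partial U$ and each ray from the origin into itself. Then $\widehat{T} := T \circ \varphi|_S$ is again a compact map with $\|\widehat{T}(\cdot)\| \geq \alpha$, and a relation $y_0 = \lambda_0 \widehat{T}(y_0)$ with $y_0 \in S$ translates, since $\varphi$ preserves rays, into $\varphi(y_0) = \lambda_0'\, T(\varphi(y_0))$ with $\varphi(y_0) \in \partial U$ and $\lambda_0' \in (0,+\infty)$. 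Hence it suffices to treat $U = B$.

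In that case I would normalise $T$. Since $T$ is compact, $\overline{T(S)}$ is compact, and the bound $\|T(x)\| \geq \alpha > 0$ keeps it away from the origin; therefore $g : S \to S$, $g(x) := T(x)/\|T(x)\|$, is well defined and continuous, and its range lies in the compact set $\{\,z/\|z\| : z \in \overline{T(S)}\,\}$, so $g$ is compact. A fixed point $g(x_0) = x_0$ would give $x_0 = \bigl(1/\|T(x_0)\|\bigr)\,T(x_0)$, i.e.\ an invariant direction with a positive eigenvalue; by the standing assumption, $g$ has no fixed point on $S$.

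To reach the contradiction I would use, essentially, the infinite-dimensionality of $V$: by the classical Klee--Dugundji result, in an infinite-dimensional normed space the unit sphere $S$ is a retract of the closed unit ball $\overline{B}$. Fix such a retraction $r : \overline{B} \to S$, so that $r|_S = \mathrm{id}_S$. Then $g \circ r : \overline{B} \to S \subseteq \overline{B}$ is continuous and has range contained in the relatively compact set $g(S)$, hence it is a compact self-map of the closed, bounded, convex set $\overline{B}$. Schauder's theorem provides $x^*$ with $x^* = g(r(x^*))$; the right-hand side lies in $S$, so $x^* \in S$, whence $r(x^*) = x^*$ and finally $x^* = g(x^*)$ --- contradicting the preceding paragraph. (Alternatively, one may extend $T$ over $\overline{U}$ to a compact map by Dugundji's extension theorem and argue via the Leray--Schauder degree along the homotopy $x \mapsto x - t\, T(x)$, $t \geq 0$; but the indispensable ingredient, which has no finite-dimensional analogue, is once more the retractibility of the ball onto its boundary.)

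I expect the heart of the matter to be exactly this last point. A naive degree or fixed-point argument only yields a fixed point of some extension of $T$ somewhere in $U$, which could be the trivial one $0$; forcing an invariant direction to lie on $\partial U$ with a strictly positive eigenvalue is what fails in finite dimensions and what the retraction $r$ repairs. A second, more routine, difficulty is to handle a general bounded open neighbourhood $U$ of $0$ whose boundary may be irregular; the gauge reduction above settles all cases of interest, and the general case follows by a standard homeomorphism argument.
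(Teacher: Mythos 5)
Your central device --- normalising to $g(x)=T(x)/\|T(x)\|$ (well defined and compact because $\overline{T(\partial U)}$ is compact and bounded away from $0$), using that in an infinite-dimensional normed space the unit sphere is a retract of the closed ball, and applying Schauder to $g\circ r$ --- is correct for the case $U=B$, and it is exactly the technique this paper uses for its own product-space versions: the authors' auxiliary map $N(x)=r_1\,T_1(\rho_1(x),y)/\|T_1(\rho_1(x),y)\|$ in the proof of Theorem~\ref{th_sys} is your $g\circ r$ in each component. (Note the paper does not reprove the classical theorem itself; it is recalled from Birkhoff--Kellogg and Granas--Dugundji.) Two small remarks on that part: the contradiction framing is superfluous, since the fixed point of $g\circ r$ directly gives $x_0=\lambda_0 T(x_0)$ with $\lambda_0=1/\|T(x_0)\|>0$; and one should invoke Schauder in the form ``continuous map with relatively compact range on a closed convex subset of a normed space'', which your compactness observation does justify.

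The genuine gap is the reduction of a general $U$ to the ball. The theorem assumes only that $U$ is a bounded open neighbourhood of $0$; your gauge argument requires $U$ to be star-shaped about $0$ (with continuous Minkowski functional), and for a general bounded open neighbourhood a ray from $0$ may meet $\partial U$ in many points, $p_U$ is not the right object, and $\partial U$ need not be homeomorphic to the sphere at all --- so there is no ``standard homeomorphism argument'' covering the general case, and your proof as written establishes the theorem only for balls and star-shaped neighbourhoods. The parenthetical alternative does not close this gap either: extending $T$ compactly over $\overline{U}$ and running the homotopy $x-tT(x)$ gives $\deg(I-t\widetilde{T},U,0)=1$ for all $t\ge 0$, hence interior solutions of $x=t\widetilde{T}(x)$, but this is no contradiction, because $\|T\|\ge\alpha$ is assumed only on $\partial U$ and the extension is uncontrolled inside $U$ (indeed, any degree argument that ignores infinite-dimensionality must fail, since the statement is false in finite dimensions). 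The classical proof for general $U$ (as in Granas--Dugundji) first rescales so that $\|F(x)\|>\sup_{z\in\overline{U}}\|z\|$ on $\partial U$ and then uses the infinite-dimensional ingredient in a stronger form than ball-to-sphere retraction: a compact set can be displaced off $\overline{U}$, which allows one to build a fixed-point-free compact extension and contradict the essentiality of the identity field. Either restrict your statement to star-shaped $U$, or replace the reduction step by an argument of this extension/essentiality type.
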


In the following 
version by Krasnosel'ski\u{\i} and Lady\v{z}enski\u{\i}~\cite{Kra-Lady}, cf.~\cite[Theorem~5.5]{Krasno}, 
a similar result is set in cones of real Banach spaces;
we recall that a cone $K$ of a normed linear space
 $(X,\| \, \|)$ is a closed set with $K+K\subset K$, $\mu K\subset K$ for all $\mu\ge 0$ and $K\cap(-K)=\{0\}$.

	\begin{theorem}\label{th_KL}
		Let $X$ be a real Banach space, $U\subset X$ be an open bounded set with $0\in U$, $K\subset X$ be a cone, $T:K\cap\overline{U}\longrightarrow K$ be compact and suppose that
		\[\inf_{x\in K\cap\partial\,U}\left\|Tx\right\|>0. \]
		Then there exist $x_0\in K\cap\partial\,U$ and $\lambda_0>0$ such that $x_0=\lambda_0\, Tx_0$.
	\end{theorem}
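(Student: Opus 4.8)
The plan is to argue by contradiction with the help of the fixed point index $i_{K}(\cdot,K\cap U)$ for compact self-maps of the cone $K$, using its standard properties: normalization, homotopy invariance, and the fact that a nonzero index forces a fixed point. Suppose the conclusion fails, so that $x\neq\lambda\,Tx$ for every $x\in K\cap\partial U$ and every $\lambda>0$. Set $m:=\inf_{x\in K\cap\partial U}\|Tx\|>0$, choose $R>0$ with $\overline U\subset\{x:\|x\|\le R\}$ (possible since $U$ is bounded), fix any $\lambda>R/m$, and recall that $T(K\cap\overline U)$ is relatively compact, hence bounded.

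First I would compute $i_{K}(\lambda T,K\cap U)$ in two ways. On one hand, $h(t,x):=t\,Tx$ is a compact homotopy $[0,\lambda]\times(K\cap\overline U)\to K$ with no fixed point on $K\cap\partial U$: at $t=0$ we have $h(0,x)=0\neq x$ because $0\in U$, while $h(t,x)=x$ for some $t\in(0,\lambda]$ would say $x=t\,Tx$, contradicting the standing assumption. Hence, by homotopy invariance and normalization, $i_{K}(\lambda T,K\cap U)=i_{K}(0,K\cap U)=1$, the null map being the constant map with value $0\in K\cap U$.

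On the other hand, I would show $i_{K}(\lambda T,K\cap U)=0$, which produces the contradiction $1=0$ and finishes the proof. Here I would use the classical ``index zero'' criterion: it suffices to exhibit some $e\in K\setminus\{0\}$ such that $x\neq\lambda Tx+\tau e$ for all $x\in K\cap\partial U$ and all $\tau\ge0$; granting this, one homotopes $\lambda T$ through $\lambda Tx+\tau e$ up to a value of $\tau$ exceeding $(R+\sup_{K\cap\overline U}\|Tx\|)/\|e\|$, at which the map has no fixed point in $K\cap\overline U$ at all, so its index is $0$. To verify the criterion, the case $\tau=0$ is immediate (if $x=\lambda Tx$ on $K\cap\partial U$, then $\|x\|=\lambda\|Tx\|\ge\lambda m>R\ge\|x\|$), and for $\tau>0$ one writes $\lambda Tx=x-\tau e$ and argues that the bound $\|\lambda Tx\|\ge\lambda m>R\ge\|x\|$ is incompatible with $x-\tau e$ and $\tau e$ lying in $K$ with sum $x$.

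The last verification is where I expect the real work to lie: ruling out a fixed point of the pushing homotopy $\tau\mapsto\lambda Tx+\tau e$ on $K\cap\partial U$ for every $\tau\ge0$ is exactly the point at which the hypotheses are consumed --- boundedness of $U$, the uniform bound $\inf_{K\cap\partial U}\|Tx\|>0$, the freedom to enlarge $\lambda$, and the order and metric structure of the cone $K$ must all be combined with care. Everything else is routine manipulation of the fixed point index. (An equivalent but more self-contained route avoids invoking the criterion by name and instead builds the admissible homotopy from $\lambda T$ to a fixed-point-free map directly; the delicate estimate is the same.)
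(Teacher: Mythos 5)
Your overall scheme (index $1$ via the homotopy $t\,Tx$, index $0$ via the ``push along $e$'' criterion, contradiction) is a legitimate route in principle, and the index-$1$ half together with the statement of the index-zero criterion are fine. The gap is exactly where you suspected it: the verification of the criterion for $\tau>0$. You claim that $x=\lambda Tx+\tau e$ with $x\in K\cap\partial U$ is impossible because $\lambda Tx$ and $\tau e$ lie in $K$, sum to $x$, and $\|\lambda Tx\|\geq\lambda m>R\geq\|x\|$. But ``two elements of $K$ cannot sum to an element of smaller norm than one of them'' is precisely normality of the cone (with constant $1$, i.e.\ monotonicity of the norm), which is \emph{not} assumed in the theorem and is false for general cones. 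For instance, in $C^1[0,1]$ with $\|u\|=\|u\|_\infty+\|u'\|_\infty$ and $K=\{u:u\geq 0\}$, take $u(t)=1+\sin(nt)$ and $v(t)=1-\sin(nt)$: both lie in $K$, $\|u+v\|=2$, yet $\|u\|\geq n$. So from $\lambda Tx\in K$, $\tau e\in K$, $\lambda Tx+\tau e=x$ and $\|\lambda Tx\|>\|x\|$ no contradiction follows. Note also that your argument never uses any property of $e$ (it is never even chosen), which is a symptom that the step cannot be sound as stated: for a fixed $e$ the relation $x=\lambda Tx+\tau e$ on $K\cap\partial U$ simply cannot be excluded by norm estimates alone. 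Your proof does work verbatim if $K$ is normal (choose $\lambda>\gamma R/m$ with $\gamma$ the normality constant), but that proves a strictly weaker statement than the theorem, which is for arbitrary cones.

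For comparison: the paper does not prove Theorem~\ref{th_KL} at all --- it is quoted from Krasnosel'ski\u{\i}--Lady\v{z}enski\u{\i} (cf.\ \cite{Krasno}). The technique the authors actually use for their own analogous results (Theorem~\ref{th_sys}) is index-free: retract $\overline{K}_{r}$ onto its relative boundary $\partial\overline{K}_{r}$ by Feltrin's explicit retraction and apply Schauder to $x\mapsto r\,T(\rho(x))/\|T(\rho(x))\|$, which requires no normality and, applied to a single operator, proves Theorem~\ref{th_KL} when $U$ is a ball (where $K\cap\overline{B}_r$ is convex); the general bounded open $U$ requires the classical argument of \cite{Kra-Lady, Krasno}. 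If you want to keep an index-theoretic proof, the real work is to replace your fixed direction $e$ by something tied to $T$ (morally, pushing along $Tx/\|Tx\|$, so that boundary solutions of the homotopy are again eigenvectors and are excluded by the contradiction hypothesis rather than by a norm comparison); making that well defined on all of $K\cap\overline{U}$, where $Tx$ may vanish, is the delicate point your sketch leaves unresolved.
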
	

\medskip
Before stating our results we fix some notation. 
Let $(X,\| \, \|)$ be a normed linear space and
$K$ a cone in $X$.
Given $r>0$, by $B_{r}$ we mean the open ball in $X$ centered at the origin and with radius $r$, while
by $\overline{B}_{r}$, $\partial\overline{B}_{r}$ we mean the closed disk and its boundary, respectively.
Moreover, we denote by $K_{r}= B_{r}\cap K$, and by $\overline{K}_{r}= \overline{B}_{r} \cap K$, resp.\ 
$\partial\overline{K}_{r}= \partial\overline{B}_{r} \cap K$, the closure and boundary of $K_{r}$ relative to $K$.

Observe that $\partial  \overline{K}_{r}$ is a retract of $\overline{K}_{r}$. An explicit example of such a retraction can be found in \cite[Example 3]{fel} defined as
\[\rho(z)=r\dfrac{z+(r-\left\|z\right\|)^2 h}{\left\|z+(r-\left\|z\right\|)^2 h\right\|}, \quad z\in \overline{K}_{r}, \]
where $h\in K\setminus\{0 \}$ is fixed.

With abuse of notation (the whole space $X$, when nontrivial, is not a cone) we will still denote $X_{r}= B_{r}$, $\overline{X}_{r}= \overline{B}_{r}$,  
$\partial\overline{X}_{r}= \partial\overline{B}_{r}$, so that, if $X$ is infinite dimensional, again $\partial  \overline{X}_{r}$ is a retract of $\overline{X}_{r}$.

Let $X_1$ and $X_2$ be normed linear spaces and $C_1\subset X_1$, $C_2\subset X_2$ such that for each $i\in\{1,2\}$ either
\begin{enumerate}
	\item[(a)] $C_i=K_i$ is a cone; or
	\item[(b)] $C_i = X_i$ is an infinite dimensional normed space. 
\end{enumerate}

In the following we will consider the product space $X_1 \times X_2$ endowed with the maximum norm.
With abuse of notation, the norm in any space will be still denoted by $\|\ \|$.
The next result is a version of the Birkhoff--Kellogg Theorem in product spaces.

\begin{theorem}\label{th_sys}
	Let $r_1,r_2$ be positive constants and suppose that $$T=(T_1,T_2):\overline{C}_{1,r_1}\times \overline{C}_{2,r_2}\longrightarrow C_1\times C_2$$ is a compact map satisfying that 
		\begin{equation}\label{cond_BK}
\inf_{\left\|x\right\|=r_1,\ \left\|y\right\|= r_2}\left\|T_1(x,y)\right\|>0 \quad \text{and} \quad \inf_{\left\|x\right\|= r_1,\ \left\|y\right\|= r_2}\left\|T_2(x,y)\right\|>0.
	\end{equation}
	Then there exist $\lambda_1,\lambda_2>0$ and $(x_0,y_0)\in \overline{C}_{1,r_1}\times \overline{C}_{2,r_2}$ with $\left\|x_0\right\|=r_1$ and $\left\|y_0\right\|=r_2$ such that
	\begin{equation}\label{eq_sol_sys}
		\left\{\begin{array}{l} x_0=\lambda_1\, T_1(x_0,y_0), \\ y_0=\lambda_2\,T_2(x_0,y_0). \end{array} \right.
	\end{equation}
\end{theorem}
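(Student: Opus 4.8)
\emph{Proof proposal.} The plan is to run the classical Birkhoff--Kellogg argument behind Theorem~\ref{th_KL} (radially normalise the operator, retract the ball onto its boundary sphere, apply a Schauder-type fixed point theorem) but \emph{component by component}, working on the product of the two spheres $\partial\overline{C}_{1,r_1}\times\partial\overline{C}_{2,r_2}$. The reason both ``slabs'' appear in \eqref{cond_BK} is precisely that this product sits inside their intersection, so that both radial normalisations are simultaneously legitimate there.

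First I would extract from \eqref{cond_BK} constants $\alpha_1,\alpha_2>0$ with $\|T_1(x,y)\|\ge\alpha_1$ whenever $\|x\|=r_1,\ \|y\|\le r_2$ and $\|T_2(x,y)\|\ge\alpha_2$ whenever $\|x\|\le r_1,\ \|y\|=r_2$; in particular $\|T_i(x,y)\|\ge\alpha_i$ for every $(x,y)\in\partial\overline{C}_{1,r_1}\times\partial\overline{C}_{2,r_2}$ and $i=1,2$. This lets me define
\[
F=(F_1,F_2):\partial\overline{C}_{1,r_1}\times\partial\overline{C}_{2,r_2}\longrightarrow\partial\overline{C}_{1,r_1}\times\partial\overline{C}_{2,r_2},\qquad F_i(x,y)=r_i\,\frac{T_i(x,y)}{\|T_i(x,y)\|}.
\]
I would then check that $F$ is well defined, continuous and compact: $\|F_i(x,y)\|=r_i$, and $F_i(x,y)\in C_i$ since $C_i$ is either the whole space $X_i$ or a cone $K_i$ (hence invariant under multiplication by the positive scalar $r_i/\|T_i(x,y)\|$), so $F_i(x,y)\in\partial\overline{C}_{i,r_i}$; compactness follows because $T$ is compact, so $\overline{T_i(\partial\overline{C}_{1,r_1}\times\partial\overline{C}_{2,r_2})}$ is compact and contained in $\{v:\|v\|\ge\alpha_i\}$, a set on which $v\mapsto r_iv/\|v\|$ is continuous.

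Next I would compose $F$ with the product retraction $\bar\rho=(\rho_1,\rho_2)$, where $\rho_i:\overline{C}_{i,r_i}\to\partial\overline{C}_{i,r_i}$ is the retraction recalled before the statement — the explicit one of \cite{fel} when $C_i=K_i$ is a (nontrivial) cone, and, using that $X_i$ is infinite dimensional exactly as in the classical Birkhoff--Kellogg theorem, the standard retraction of $\overline{B}_{r_i}$ onto $\partial\overline{B}_{r_i}$ when $C_i=X_i$. Then $G:=F\circ\bar\rho$ maps the nonempty, closed, bounded, convex set $\overline{C}_{1,r_1}\times\overline{C}_{2,r_2}$ into its subset $\partial\overline{C}_{1,r_1}\times\partial\overline{C}_{2,r_2}$ and is compact, so the Schauder fixed point theorem (in the form for compact maps on closed convex subsets of a normed space, applicable because $T$ is compact) provides $(x_0,y_0)$ with $G(x_0,y_0)=(x_0,y_0)$. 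Since $(x_0,y_0)\in\mathrm{range}\,G\subseteq\partial\overline{C}_{1,r_1}\times\partial\overline{C}_{2,r_2}$, we get $\|x_0\|=r_1$, $\|y_0\|=r_2$ and $\bar\rho(x_0,y_0)=(x_0,y_0)$, whence $(x_0,y_0)=F(x_0,y_0)$; setting $\lambda_i:=r_i/\|T_i(x_0,y_0)\|>0$ then yields \eqref{eq_sol_sys}.

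The argument is essentially bookkeeping once the set-up is in place; the only genuinely delicate points — and what I would be careful about — are (i) the well-definedness of $F$ on \emph{all} of $\partial\overline{C}_{1,r_1}\times\partial\overline{C}_{2,r_2}$, which is exactly what the two-slab form of \eqref{cond_BK} buys and which would fail under a naive ``$\|T_i\|>0$ on $\|x_i\|=r_i$'' hypothesis, and (ii) the requirement that $\partial\overline{C}_{i,r_i}$ be a retract of $\overline{C}_{i,r_i}$, which is automatic for a nontrivial cone but is what forces the infinite-dimensionality assumption in case (b), just as in the classical Birkhoff--Kellogg theorem.
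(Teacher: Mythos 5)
Your proposal is correct and takes essentially the same approach as the paper: radially normalise each component, compose with the retractions $\rho_i$ onto $\partial\overline{C}_{i,r_i}$, apply the Schauder theorem to the resulting compact self-map of $\overline{C}_{1,r_1}\times\overline{C}_{2,r_2}$, and read off $\lambda_i=r_i/\|T_i(x_0,y_0)\|$. The only cosmetic difference is that you retract both arguments, $T_i(\rho_1(x),\rho_2(y))$, whereas the paper's auxiliary map retracts only the $i$-th one, $T_1(\rho_1(x),y)$ and $T_2(x,\rho_2(y))$; both are well defined under \eqref{cond_BK} and lead to the same conclusion.
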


\begin{proof}
For each $i=1,2$, let us consider a retraction $\rho_i:\overline{C}_{i,r_i}\rightarrow  \partial  \overline{C}_{i,r_i}$. 
Now, define the auxiliary map $N=(N_1,N_2): \overline{C}_{1,r_1}\times \overline{C}_{2,r_2} \rightarrow \overline{C}_{1,r_1}\times \overline{C}_{2,r_2}$ as
\[N(x,y)=\left(r_1\dfrac{T_1\left(\rho_1(x),\rho_2(y)\right)}{\left\|T_1\left(\rho_1(x),\rho_2(y)\right)\right\|},r_2\dfrac{T_2\left(\rho_1(x),\rho_2(y)\right)}{\left\|T_2\left(\rho_1(x),\rho_2(y)\right)\right\|} \right), \]
and observe that, by \eqref{cond_BK}, $N$ is well-defined.	
Since $N$ is a compact map, Schauder fixed point theorem (see e.g.\ \cite[\S 6, Theorem~3.2]{GD})
 ensures that $N$ has at least one fixed point $(x_0,y_0)\in \overline{C}_{1,r_1}\times \overline{C}_{2,r_2}$. Observe that $N\left( \overline{C}_{1,r_1}\times \overline{C}_{2,r_2} \right)\subset  \partial \overline{C}_{1,r_1}\times \partial \overline{C}_{2,r_2}$ and so it follows that $(x_0,y_0)\in \partial \overline{C}_{1,r_1}\times \partial \overline{C}_{2,r_2}$, that is,
\[x_0=r_1\dfrac{T_1(x_0,y_0)}{\left\|T_1(x_0,y_0)\right\|}, \quad y_0=r_2\dfrac{T_2(x_0,y_0)}{\left\|T_2(x_0,y_0)\right\|}. \]
Taking $\lambda_i=r_i/\left\|T_i(x_0,y_0)\right\|$, $i=1,2$, the proof is finished.
\end{proof}

\begin{remark}
	It should be noted that, under the assumptions of Theorem~\ref{th_sys}, the existence of $\lambda_0>0$ and $(x_0,y_0)\in \overline{C}_{1,r_1}\times \overline{C}_{2,r_2}$ with $\left\|x_0\right\|=r_1$ and $\left\|y_0\right\|=r_2$ solving
    the equation
   	\begin{equation*}\label{sys_1par}  
		\left\{\begin{array}{l} x_0=\lambda_0\, T_1(x_0,y_0), \\ y_0=\lambda_0\,T_2(x_0,y_0) \end{array} \right.
	\end{equation*}
    cannot be guaranteed.
	Indeed, consider as Banach spaces $X_1=X_2=\mathbb{R}$,
	the cones $K_1=K_2=[0,+\infty)$, $r_1=r_2=1$ and the continuous function $T:[0,1]\times[0,1]\rightarrow [0,+\infty)\times [0,+\infty)$ given by
	\[T(x,y)=(T_1(x,y),T_2(x,y))=(2x+y,x+3y). \]
	Note that 
	$\displaystyle \inf_{{\left\|x\right\|=r_1,\ \left\|y\right\|= r_2}}\left|T_1(x,y)\right|=3>0$ and $\displaystyle \inf_{{\left\|x\right\|=r_1,\ \left\|y\right\|= r_2}}\left|T_2(x,y)\right|=4>0$, 
	but there is no $\lambda\in(0,+\infty)$ such that $(1,1)=\lambda\,T(1,1)=\lambda\,(3,4)$.
\end{remark}

Let us focus now on operators defined in the product of a cone times an infinite dimensional normed space. In this case, an additional solution can be obtained. 

\begin{theorem} \label{thmsys-kb}
	Let $K_1$ be a cone in the normed linear space $X_1$, and $X_2$ be an infinite dimensional normed space. 
	Let $r_1,r_2$ be positive constants and suppose that $T=(T_1,T_2):\overline{K}_{1,r_1}\times \overline{B}_{2,r_2}\longrightarrow K_1\times X_2$ is a compact map satisfying that 
	\[\inf_{\left\|x\right\|= r_1,\ \left\|y\right\|= r_2}\left\|T_1(x,y)\right\|>0 \quad \text{and} \quad \inf_{\left\|x\right\|= r_1,\ \left\|y\right\|= r_2}\left\|T_2(x,y)\right\|>0. \]
	Then there exist $\lambda_{1,1},\lambda_{2,1},\lambda_{1,2} >0$, $\lambda_{2,2}<0$ and $(x_{0,j},y_{0,j})\in \overline{K}_{1,r_1}\times \overline{B}_{2,r_2} $, $j=1,2$, with $\left\|x_{0,j}\right\|=r_1$ and $\left\| y_{0,j}\right\|=r_2$ such that
	\begin{equation*}
		\left\{\begin{array}{l} x_{0,j}=\lambda_{1,j}\, T_1(x_{0,j},y_{0,j}), \\ y_{0,j}=\lambda_{2,j}\,T_2(x_{0,j},y_{0,j}), \end{array} \right. \qquad (j=1,2).
	\end{equation*}
\end{theorem}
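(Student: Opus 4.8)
The plan is to obtain the two solutions by two applications of the Schauder fixed point theorem to suitable modifications of the auxiliary map used in the proof of Theorem~\ref{th_sys}. For $j=1$ there is nothing new to do: a cone $K_1$ and an infinite dimensional normed space $X_2$ are exactly the admissible choices (a) and (b) for $C_1$ and $C_2$, so Theorem~\ref{th_sys} applied with $C_1=K_1$, $C_2=X_2$ already produces $\lambda_{1,1},\lambda_{2,1}>0$ and $(x_{0,1},y_{0,1})\in\overline{K}_{1,r_1}\times\overline{B}_{r_2}$ with $\|x_{0,1}\|=r_1$, $\|y_{0,1}\|=r_2$ solving the system. Hence the real content is the construction of the second solution, the one carrying the negative eigenvalue $\lambda_{2,2}<0$.

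For $j=2$ I would fix retractions $\rho_1:\overline{K}_{1,r_1}\to\partial\overline{K}_{1,r_1}$ and $\rho_2:\overline{B}_{r_2}\to\partial\overline{B}_{r_2}$ (the latter exists precisely because $X_2$ is infinite dimensional) and consider
\[
\widetilde{N}(x,y)=\left(r_1\frac{T_1(\rho_1(x),y)}{\|T_1(\rho_1(x),y)\|},\ -\,r_2\frac{T_2(x,\rho_2(y))}{\|T_2(x,\rho_2(y))\|}\right),
\]
i.e.\ the same map $N$ as in the proof of Theorem~\ref{th_sys} but with the sign of the second component reversed. The two infimum hypotheses guarantee, exactly as before, that the denominators never vanish on the relevant sets $\{\|x\|=r_1,\ \|y\|\le r_2\}$ and $\{\|x\|\le r_1,\ \|y\|=r_2\}$, so $\widetilde{N}$ is well-defined, and it is compact since $T$ is. The key observation is that $\widetilde{N}$ still maps $\overline{K}_{1,r_1}\times\overline{B}_{r_2}$ into itself: the first component lands in $\partial\overline{K}_{1,r_1}\subset\overline{K}_{1,r_1}$ just as in Theorem~\ref{th_sys}, while the second lands in $-\partial\overline{B}_{r_2}=\partial\overline{B}_{r_2}\subset\overline{B}_{r_2}$ because the ball of a normed space is symmetric about the origin. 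This is exactly the point at which the hypothesis that $C_2$ is a normed space, rather than a cone, enters — a sign flip would throw the second component out of a cone, but not out of a ball.

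Then the Schauder fixed point theorem yields $(x_{0,2},y_{0,2})\in\overline{K}_{1,r_1}\times\overline{B}_{r_2}$ with $\widetilde{N}(x_{0,2},y_{0,2})=(x_{0,2},y_{0,2})$. Since the image of $\widetilde{N}$ is contained in $\partial\overline{K}_{1,r_1}\times\partial\overline{B}_{r_2}$, it follows that $\|x_{0,2}\|=r_1$, $\|y_{0,2}\|=r_2$, hence $\rho_1(x_{0,2})=x_{0,2}$ and $\rho_2(y_{0,2})=y_{0,2}$, so that
\[
x_{0,2}=r_1\frac{T_1(x_{0,2},y_{0,2})}{\|T_1(x_{0,2},y_{0,2})\|},\qquad y_{0,2}=-\,r_2\frac{T_2(x_{0,2},y_{0,2})}{\|T_2(x_{0,2},y_{0,2})\|}.
\]
Putting $\lambda_{1,2}=r_1/\|T_1(x_{0,2},y_{0,2})\|>0$ and $\lambda_{2,2}=-r_2/\|T_2(x_{0,2},y_{0,2})\|<0$ closes the argument. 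I do not expect a genuine obstacle here: once one notices that the sign reversal is legitimate inside a symmetric ball, the second half is a direct rerun of the proof of Theorem~\ref{th_sys}; the only point worth recording is that the two eigenvalue pairs are truly distinct, which is automatic since $\lambda_{2,1}>0>\lambda_{2,2}$.
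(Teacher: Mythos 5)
Your proposal is correct and follows essentially the same route as the paper: the first pair comes from Theorem~\ref{th_sys}, and the second from applying the Schauder fixed point theorem to the same auxiliary map with the sign of the second component reversed, which stays inside $\overline{B}_{r_2}$ by the symmetry of the ball. Your explicit remark on why the sign flip is legitimate only in the normed-space component (and not in a cone) is a point the paper leaves implicit, but the argument is identical.
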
 

\begin{proof}
	The first solution is ensured by Theorem \ref{th_sys}. In order to obtain the second one, just consider the map $\tilde{N}: \overline{K}_{1,r_1}\times \overline{B}_{2,r_2}\rightarrow \overline{K}_{1,r_1}\times \overline{B}_{2,r_2}$ given by
	\[\tilde{N}(x,y)=\left(r_1\dfrac{T_1\left(\rho_1(x),\rho_2(y)\right)}{\left\|T_1\left(\rho_1(x),\rho_2(y)\right)\right\|},-r_2\dfrac{T_2\left(\rho_1(x),\rho_2(y)\right)}{\left\|T_2\left(\rho_1(x),\rho_2(y)\right)\right\|} \right). \]
	As a consequence of Schauder fixed point theorem, $\tilde{N}$ has a fixed point $(x_{0,2},y_{0,2})$ located in $\partial \overline{K}_{1,r_1} \times \partial\overline{B}_{2,r_2}$, that is, 
	\[\left\{\begin{array}{l} x_{0,2}=\lambda_{1,2}\, T_1(x_{0,2},y_{0,2}), \\ y_{0,2}=\lambda_{2,2}\,T_2(x_{0,2},y_{0,2}), \end{array} \right. \]
	where $\lambda_{1,2}=r_1/\left\|T_1\left(x_{0,2},y_{0,2}\right)\right\|>0$ and $\lambda_{2,2}=-r_2/\left\|T_2\left(x_{0,2},y_{0,2}\right)\right\|<0$.
\end{proof}

\begin{remark}
	Under the assumptions of Theorem \ref{th_sys}, if both $C_1$ and $C_2$ are infinite dimensional normed spaces, then there exist four couples of numbers $\lambda_1,\lambda_2$ and associated points $(x_0,y_0)\in \overline{B}_{1,r_1}\times \overline{B}_{2,r_2} $ with $\left\|x_0\right\|=r_1$ and $\left\|y_0\right\|=r_2$ such that \eqref{eq_sol_sys} holds. Indeed, it suffices to apply the Schauder theorem to each auxiliary map
	\[N_{j,k}(x,y)=\left((-1)^j r_1\dfrac{T_1\left(\rho_1(x),\rho_2(y)\right)}{\left\|T_1\left(\rho_1(x),\rho_2(y)\right)\right\|},(-1)^k r_2\dfrac{T_2\left(\rho_1(x),\rho_2(y)\right)}{\left\|T_2\left(\rho_1(x),\rho_2(y)\right)\right\|} \right), \quad j,k\in\{1,2\}. \]
\end{remark}

Let us now stress, once again, that a key feature of our results is the localization of the solutions.
In order to compare our results with the classical ones we provide now a direct application of Theorem \ref{th_KL} to 
the situation in which, for each $i\in\{1,2\}$, $C_i=K_i$ is a cone. An analogous statement holds when $C_i$, $i\in\{1,2\}$,  is either a cone or an infinite dimensional normed space.

\begin{theorem}\label{th_BK_cones}
	Let $X_1$ and $X_2$ be real Banach Spaces and let $K_1\subset X_1$, $K_2\subset X_2$ cones.
    Let $r_1,r_2$ be positive constants and suppose that 
    $$
    T=(T_1,T_2):\overline{K}_{1,r_1}\times \overline{K}_{2,r_2}\longrightarrow K_1\times K_2,
    $$ is a compact map satisfying that 
		\begin{equation}\label{cond_BK_old}
\inf_{\left\|x\right\|=r_1,\ \left\|y\right\| \leq r_2}\left\|T_1(x,y)\right\|>0 \quad \text{and} \quad \inf_{\left\|x\right\|\leq r_1,\ \left\|y\right\|= r_2}\left\|T_2(x,y)\right\|>0.
	\end{equation}
	Then there exists $\lambda_0>0$ and $(x_0,y_0)\in \overline{K}_{1,r_1}\times \overline{K}_{2,r_2}$ with $\left\|x_0\right\|=r_1$ or $\left\|y_0\right\|=r_2$ such that
	\begin{equation}\label{eq_sol_sys_old}
		\left\{\begin{array}{l} x_0=\lambda_0\, T_1(x_0,y_0), \\ y_0=\lambda_0\,T_2(x_0,y_0). \end{array} \right.
	\end{equation}
\end{theorem}

\begin{proof}
First of all note that $K=K_1\times K_2$ is a cone in the space $X=X_1\times X_2$. Let $U={B}_{1,r_1}\times {B}_{2,r_2}$ and take $(x,y) \in K\cap\partial U$. Now either $(x,y) \in \partial B_{1,r_1} \times \overline{B}_{2,r_2}$ or $(x,y)\in \overline{B}_{1,r_1} \times \partial B_{2,r_2}$. 
In the first case we have 
$$ \|T(x,y)\| \geq  \|T_1(x,y)\|$$ while in the second case we obtain $$ \|T(x,y)\| \geq  \|T_2(x,y)\|.$$ In both cases, by condition~\eqref{cond_BK_old}, we have
\begin{multline*}
    \|T(x,y)\| \geq \min\{ \|T_1(x,y)\|, \|T_2(x,y)\|\}\\ \geq  \min \Bigl \{ \inf_{\left\|x\right\| =r_1,\ \left\|y\right\| \leq r_2}\left\|T_1(x,y)\right\|, \inf_{\left\|x\right\|\leq r_1,\ \left\|y\right\|= r_2}\left\|T_2(x,y)\right\| \Bigr \}>0.
\end{multline*}
This
implies the inequality
    $$ \inf_{(x,y) \in K\cap\partial U} \|T(x,y)\| > 0. $$
    Thus, by Theorem \ref{th_KL}, there exist $\lambda_0 > 0$ and $(x_0, y_0) \in K\cap\partial U$ such that \eqref{eq_sol_sys_old} holds.
\end{proof}
\begin{remark}\label{comparison}
Note that condition~\eqref{cond_BK_old} is stronger than~\eqref{cond_BK}; this situation is particularly meaningful in the applications, see the hypothesis  $b)$ in Theorem~\ref{th_ODE_exis}.
Furthermore, note that Theorem~\ref{th_sys} provides a better localization of the eigenfunctions (both of them are nontrivial), since Theorem~\ref{th_BK_cones} provides couples  $(x_0, y_0)$ in which one of the components can be trivial.
\end{remark}

\section{Some applications to differential systems} \label{sec_appl}
In this Section we present some applications of the above results to the context of systems of ODEs and PDEs.

\subsection{Eigenvalues for a system of elliptic PDEs}\label{eigen1}
We begin by illustrating the applicability of Theorem~\ref{th_sys} in the context of PDEs. In particular, we discuss the existence of eigenvalues and eigenfunctions 
of quasilinear elliptic systems subject to homogeneous Dirichlet boundary conditions of the form
\begin{equation}\label{sys_elliptic}
	\left\{\begin{array}{ll} -\Delta u=\lambda_1 f(x,u,v), & \ \text{ in } \Omega, \\ -\Delta v=\lambda_2 g(x,u,v), & \ \text{ in } \Omega, \\ u=v=0, & \ \text{ on } \partial\,\Omega,  \end{array} \right.
\end{equation}
where $\Omega\subset\mathbb{R}^n$ denotes the open unit ball in $\mathbb{R}^n$, $f:\overline{\Omega}\times \mathbb{R}_{+}\times\mathbb{R}_{+}\rightarrow\mathbb{R}_{+}$ and $g:\overline{\Omega}\times \mathbb{R}_{+}\times\mathbb{R}_{+}\rightarrow \mathbb{R}_{+}$ are continuous functions.

It is folklore (see for example~\cite[Section 7.2]{Krasno} or \cite[Section 4.2]{GT}), that if $h$ is sufficiently smooth, the unique solution of the linear PDE 
\begin{equation}\label{linearbvp}
-\Delta w =h(x) \ \text{ in } \Omega, \quad w=0 \ \text{ on } \partial\,\Omega, 
\end{equation}
can be represented 
in the integral form
\begin{equation}\label{linearHam}
w(x)=\displaystyle \int_{\Omega}k(x,y)\,h(y)\,dy=:L(h)(x),
\end{equation}
where $k$ is the Green's function associated to the PDE. The explicit formula for the Geeen's function for the ball depends on the dimension of the space and can be found for example in \cite[Section 2]{GT}. It is also well known that the Green's function is non-negative and continuous for $x\neq y$ and singular on the diagonal. The recent paper~\cite{yang2021} provides the expression of the Green's function and studies its properties in more general domains. Note that if the function $h$ that occurs in~\eqref{linearbvp} is continuous one may still study the solutions of the corresponding Hammerstein integral equation~\eqref{linearHam} and may interpret those as \emph{weak} solutions of~\eqref{linearbvp}, see~\cite[Section 7.2]{Krasno}. 
Now, denote by 
$$P:=\{u\in \mathcal{C}(\overline{\Omega}):u\geq 0  \},$$
where $\mathcal{C}(\overline{\Omega})$ is the space  of continuous functions endowed with the usual supremum norm
$\left\|u\right\|_{\infty}=\max_{x\in\overline{\Omega}}\left|u(x) \right|$. Note that $P$ is a cone in $\mathcal{C}(\overline{\Omega})$ and 
the linear operator $L$ leaves the cone $P$ invariant and is compact, see for example~\cite[Section 7.2]{Krasno} and~\cite[Proposition~5.1]{yang2021}. 

With these ingredients in mind, we say that a pair $(u,v)\in \mathcal{C}(\overline{\Omega})\times \mathcal{C}(\overline{\Omega})$ is a {\it (weak) solution}
of problem~\eqref{sys_elliptic} if $(u,v)$ is a solution of the integral system 
\begin{equation*}\label{sys_Ham-PDE}
	\left\{\begin{array}{l} 
	u(x)=\displaystyle \lambda_1 \int_{\Omega}k(x,y)\,f(y,u(y),v(y))\,dy, \\[0.3cm] 
	v(x)=\displaystyle\lambda_2 \int_{\Omega}k(x,y)\,g(y,u(y),v(y))\,dy.  
	\end{array} \right.
\end{equation*}
In other words, $(u,v)$ is a solution of \eqref{sys_elliptic}
if and only if
\begin{equation}\label{sys_abstract}
	\left\{\begin{array}{l} 
	u=\displaystyle \lambda_1 T_1(u,v), \\[0.3cm] 
	v=\displaystyle\lambda_2 T_2(u,v),  
	\end{array} \right.
\end{equation}
where 
\begin{equation*}\label{opTpde}
	\begin{array}{l} 
	T_1(u,v)(x)=\displaystyle  \int_{\Omega}k(x,y)\,f(y,u(y),v(y))\,dy, \\[0.3cm] 
	T_2(u,v)(x)=\displaystyle \int_{\Omega}k(x,y)\,g(y,u(y),v(y))\,dy.  
	\end{array} 
\end{equation*}
We are now in position to state the following Theorem.
\begin{theorem}\label{th_PDE_exis}
Let $r_1,r_2$ be positive constants and suppose that exist two continuous functions $\underline{f}, \underline{g}:\overline{\Omega}\rightarrow\mathbb{R}_+$ such that the following conditions hold:
	\begin{enumerate}[$a)$]
		\item $ f(x,u,v)\geq \underline{f}(x)$ on $\overline{\Omega}\times [0,r_1]\times[0,r_2]$ and 
		\[\sup_{x\in\overline{\Omega}}\int_{\Omega}k(x,y)\,\underline{f}(y)\,dy>0;  \]
		\item $g(x,u,v)\geq \underline{g}(x)$ on $\overline{\Omega}\times [0,r_1]\times[0,r_2]$
		and 
		\[\sup_{x\in\overline{\Omega}}\int_{\Omega}k(x,y)\,\underline{g}(y)\,dy>0.  \]
	\end{enumerate}	
Then there exist $\lambda_1,\lambda_2>0$ and $(u_0,v_0)\in P\times P$ with $\left\|u_0\right\|_{\infty}=r_1$ and $\left\|v_0\right\|_{\infty}=r_2$ that satisfy the system~\eqref{sys_abstract}.
\end{theorem}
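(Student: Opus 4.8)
The plan is to recognize \eqref{sys_Ham-PDE} as a fixed-point/eigenvalue system for a Hammerstein-type operator and to apply Theorem~\ref{th_sys} with $C_1=C_2=P$, the cone of nonnegative continuous functions in $\mathcal{C}(\overline{\Omega})$, and with the prescribed radii $r_1,r_2$. Accordingly, I would define $T=(T_1,T_2):\overline{P}_{r_1}\times\overline{P}_{r_2}\to P\times P$ by
\[
T_1(u,v)(x)=\int_{\Omega}k(x,y)f(y,u(y),v(y))\,dy,\qquad T_2(u,v)(x)=\int_{\Omega}k(x,y)g(y,u(y),v(y))\,dy .
\]
Since the Green's function $k$ is nonnegative and $f,g$ take values in $\mathbb{R}_+$, each $T_i(u,v)$ is a nonnegative continuous function, so $T$ is well defined and maps into $P\times P$; compactness of $T$ follows from the classical mapping properties of the Green operator of $-\Delta$ on the ball composed with the (continuous, bounded-set-to-bounded-set) Nemytskii operators associated to $f$ and $g$.

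Next I would check the Birkhoff--Kellogg condition~\eqref{cond_BK}. Fix $(u,v)\in\overline{P}_{r_1}\times\overline{P}_{r_2}$ with $\left\|u\right\|_{\infty}=r_1$. For every $y\in\overline{\Omega}$ one has $0\le u(y)\le r_1$ and $0\le v(y)\le r_2$, so hypothesis $a)$ yields $f(y,u(y),v(y))\ge\underline{f}(y)$ pointwise, whence, using $k\ge 0$,
\[
\left\|T_1(u,v)\right\|_{\infty}\ \ge\ \sup_{x\in\overline{\Omega}}\int_{\Omega}k(x,y)f(y,u(y),v(y))\,dy\ \ge\ \sup_{x\in\overline{\Omega}}\int_{\Omega}k(x,y)\underline{f}(y)\,dy\ =:\ m_1\ >\ 0 .
\]
As $m_1$ is independent of $(u,v)$, the first infimum in~\eqref{cond_BK} is at least $m_1>0$; arguing symmetrically with hypothesis $b)$ and $\underline{g}$ gives the second infimum bounded below by some $m_2>0$.

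With both hypotheses of Theorem~\ref{th_sys} verified, that theorem produces $\lambda_1,\lambda_2>0$ and $(u_0,v_0)\in\overline{P}_{r_1}\times\overline{P}_{r_2}$ with $\left\|u_0\right\|_{\infty}=r_1$, $\left\|v_0\right\|_{\infty}=r_2$ satisfying $u_0=\lambda_1 T_1(u_0,v_0)$ and $v_0=\lambda_2 T_2(u_0,v_0)$, which is exactly \eqref{sys_Ham-PDE}, so the proof is complete. I expect the only non-routine point to be the compactness of $T$: because the Green's function of $-\Delta$ on the unit ball is unbounded on the diagonal for $n\ge 2$, one cannot invoke a continuous-kernel argument directly, but the standard weak-singularity (Newtonian potential) estimates show that $T$ sends bounded subsets of $\mathcal{C}(\overline{\Omega})^2$ into relatively compact, equicontinuous families, so this is handled by classical results; everything else is a direct verification.
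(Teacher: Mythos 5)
Your proposal is correct and follows essentially the same route as the paper's own proof: the same operator $T=(T_1,T_2)$ on $\overline{P}_{r_1}\times\overline{P}_{r_2}$, the same pointwise lower bound $f(y,u(y),v(y))\ge\underline{f}(y)$ giving a uniform lower bound on $\left\|T_1(u,v)\right\|_{\infty}$ (and symmetrically for $T_2$), and then a direct application of Theorem~\ref{th_sys}. The paper simply cites ``classical PDE theory'' for compactness, whereas you add the (correct) remark that the weak singularity of the Green's function must be handled by standard potential estimates rather than a continuous-kernel argument.
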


\begin{proof}
Let us consider the Banach spaces  $X_1=X_2= \mathcal{C}(\overline{\Omega})$ and the cones $K_1=K_2=P$. 
Due to the compactness and invariance properties of the operator $L$ defined in~\eqref{linearHam}, combined with the continuity of $f,g$, the operator
$$T=(T_1,T_2):\overline{P}_{r_1}\times \overline{P}_{r_2}\longrightarrow P\times P,$$
defined in \eqref{opTpde},
is compact. Now let us consider $(u,v)\in P\times P$ such that $\left\|u\right\|_{\infty}=r_1, \left\|v\right\|_{\infty}= r_2$. For every $x\in \overline{\Omega}$ we have
\begin{equation*}
\|T_1(u,v)\|_{\infty} \geq T_1(u,v)(x)=\displaystyle  \int_{\Omega}k(x,y)\,f(y,u(y),v(y))\,dy\geq \displaystyle  \int_{\Omega}k(x,y)\,\underline{f}(y)\,dy.
\end{equation*}
Then we get
\begin{equation}\label{RHS_PDE}
\|T_1(u,v)\|_{\infty} \geq \sup_{x\in\overline{\Omega}}  \int_{\Omega}k(x,y)\,\underline{f}(y)\,dy.
\end{equation}
Note that the RHS of \eqref{RHS_PDE} does not depend on the particular $(u,v)$ chosen. Therefore we  obtain
$$
\inf_{\left\|u\right\|_{\infty}=r_1,\  \left\|v\right\|_{\infty}= r_2}\left\|T_1(u,v)\right\|_{\infty} \geq \sup_{x\in\overline{\Omega}}  \int_{\Omega}k(x,y)\,\underline{f}(y)\,dy >0.
$$
A similar argument applies in the case of the component $T_2$.
Then a direct application of Theorem~\ref{th_sys} yields the result.
\end{proof}
In the following example we show the applicability of Theorem~\ref{th_PDE_exis}.
\begin{example}
Take the open set $\Omega=\{(x,y)\in\mathbb{R}^2:x^2+y^2<1 \}$ and consider the system
	\begin{equation}\label{eq_ex}
		\left\{\begin{array}{ll} -\Delta u=\lambda_1(1+x^2)e^u(2+\cos v), & \text{ in } \Omega, \\[0.3cm] -\Delta v=\lambda_2(1+y^2)(1+v^2)(2+\sin u), & \text{ in } \Omega, \\[0.3cm] u=v=0, & \text{ on } \partial\,\Omega. \end{array}\right.
	\end{equation}
Now, note that the function 
$$
w(x,y):=\dfrac{1}{4}(1-x^2-y^2),
$$
is the unique solution of the ``torsion problem''
$$
-\Delta w=1\ \text{in}\ \Omega, \quad w=0\ \text{on}\ \partial\,\Omega. 
$$
Now fix $r_1, r_2>0$ and observe that the conditions $a)$ and $b)$ of Theorem~\ref{th_PDE_exis} are satisfied with the choice of $\underline{f}(x,y)=\underline{g}(x,y)\equiv 1$, since 
$$\sup_{(x,y)\in\overline{\Omega}}\int_{\Omega}k((x,y),(\xi,\eta))\,d(\xi,\eta)=\sup_{(x,y)\in\overline{\Omega}}\dfrac{1}{4}(1-x^2-y^2)=\dfrac{1}{4}.
$$
Note that $(r_1, r_2)$ can be chosen arbitrarily in $(0,+\infty)\times (0,+\infty)$, thus we obtain the existence of infinitely many couples of type $(\lambda_1, \lambda_2)$, with $\lambda_1,\lambda_2>0$, and associated couples of nonnegative functions~$(u_0, v_0)$ with prescribed norm that satisfy the system \eqref{eq_ex}.
\end{example}

\subsection{Eigenvalues for a system of ODEs} \label{eigen2}

Here we apply Theorem \ref{thmsys-kb} to the study of eigenvalues and eigenfunctions for the following class of BVPs for systems of ODEs:

\begin{equation}\label{syst_ode}
\left\{	\begin{array}{ll}
		u''(t)+\lambda_1 f(t,u(t),v(t))=0, \ & t\in[0,1], \\
		v''(t)+\lambda_2 g(t,u(t),v(t))=0, \ & t\in[0,1],\\
	    u'(0)=u(1)+u'(1)=0,  \\ v'(0)=v(1)-\frac12 v'(1)=0, 
	\end{array}
\right.
\end{equation}
where
$f:[0,1]\times \mathbb{R}_{+}\times\mathbb{R}\rightarrow\mathbb{R}_{+}$ and $g:[0,1]\times \mathbb{R}_{+}\times\mathbb{R}\rightarrow\mathbb{R}_{+}$ are continuous functions.

Note that the system~\eqref{syst_ode} can be rewritten as a system of Hammerstein integral equations, namely
\begin{equation}\label{eq_Ham}
\left\{
	\begin{array}{r}
		u(t)=\lambda_1 \displaystyle\int_{0}^{1} k_1(t,s)\,f(s,u(s),v(s))\,ds, \\[0.3cm]
		v(t)=\lambda_2 \displaystyle\int_{0}^{1} k_2(t,s)\,g(s,u(s),v(s))\,ds,
	\end{array}
	\right.
\end{equation}
where $k_1$ and $k_2$ are the corresponding Green's functions, which are given by
\[k_1(t,s)=\left\{\begin{array}{ll} {2-t}, & \quad s\leq t, \\ {2-s}, & \quad s>t, \end{array} \right. \]
and 
\[k_2(t,s)=\frac12 \left\{\begin{array}{ll} 1-2t, & \quad s\leq t, \\ 1-2s, & \quad s>t. \end{array} \right. \]
We point that the kernel $k_1$ is continuous, non-negative, and has been studied in details in~\cite{Lan, lan2009} while the kernel $k_2$ is continuous, changes sign, and can be obtained via a direct calculation.
%found in Hartman's book: see \cite[Chap.~XI]{hart}.
In this case we utilize the space $C[0,1]$, endowed with the usual supremum norm $\|u\|_{\infty}:=\max_{t\in [0,1]}|u(t)|$.
We construct, following the computations in~\cite{lan2009}, the cone
\begin{equation}\label{Guo-cone}
K = \left\{u\in C[0,1]:  \displaystyle{\min_{t\in[0,1]}}u(t) \geq \Bigl(1-\frac{t}{2}\Bigr) \|u\|_{\infty}\right\},
\end{equation}
and consider its product with the space itself.

With these ingredients we can state the following Theorem; the construction of the minorant $\underline{f}$ follows the argument in~\cite{lan2001}.

\begin{theorem}\label{th_ODE_exis}
Let $r_1,r_2$ be positive constants and suppose that exist two measurable functions $\underline{f}: [0,1]\rightarrow\mathbb{R}_+$
and $\underline{g}: [0,1]\rightarrow\mathbb{R}_+$ such that the following conditions hold:
	\begin{enumerate}[$a)$]
		\item $ f(t,u,v)\geq \underline{f}(t)$ on $[0,1]\times [r_1/2,r_1]\times[-r_2,r_2]$ and 
		\[\sup_{t \in [0,1]}  \displaystyle \int_{0}^{1}k_1(t,s)\, \underline{f}(s)\,ds >0;  \]
		\item $g(t,u,v)\geq \underline{g}(t)$ on $[0,1]\times [r_1/2,r_1]\times[-r_2,r_2]$
		and 
		\[\displaystyle\int_{0}^{1} \underline{g}(s)\,ds>0.  \]
	\end{enumerate}	

Then there exist $\lambda_{1,1},\lambda_{2,1},\lambda_{1,2} >0$, $\lambda_{2,2}<0$ and $(u_{0,j},v_{0,j})\in \overline{K}_{r_1}\times \overline{B}_{r_2} $, $j=1,2$, with $\left\|u_{0,j}\right\|=r_1$ and $\left\|v_{0,j}\right\|=r_2$ that satisfy the system~\eqref{eq_Ham}.
\end{theorem}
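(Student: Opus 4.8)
The plan is to rewrite \eqref{eq_Ham} as a single vector eigenvalue equation on a product of a conical shell and a ball, and then apply Theorem~\ref{thmsys-kb}. Concretely, I would work in $X_1=X_2=C[0,1]$, take $K_1=K$ to be the cone in \eqref{Guo-cone}, and let $C_2=X_2=C[0,1]$ (which is infinite dimensional, as required). Then I would introduce the operator $T=(T_1,T_2):\overline{K}_{r_1}\times\overline{B}_{r_2}\to K\times C[0,1]$ defined by
\[T_1(u,v)(t)=\int_0^1 k_1(t,s)f(s,u(s),v(s))\,ds,\qquad T_2(u,v)(t)=\int_0^1 k_2(t,s)g(s,u(s),v(s))\,ds,\]
so that $(u,v)$ solves \eqref{eq_Ham} with parameters $(\lambda_1,\lambda_2)$ precisely when $u=\lambda_1 T_1(u,v)$ and $v=\lambda_2 T_2(u,v)$.

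First I would check that $T$ is well defined, the only nontrivial point being $T_1(u,v)\in K$. Nonnegativity is clear since $k_1\ge 0$ and $f\ge 0$. The Harnack-type estimate in \eqref{Guo-cone} follows from the elementary inequality $k_1(t,s)\ge\tfrac14 k_1(\tau,s)$, valid for all $t\in[1/4,3/4]$ and all $\tau,s\in[0,1]$: indeed $k_1(\tau,s)\le k_1(s,s)=s(1-s)$, while for $t\in[1/4,3/4]$ one has $k_1(t,s)=(1-t)s\ge\tfrac14 s\ge\tfrac14 s(1-s)$ when $s\le t$, and $k_1(t,s)=t(1-s)\ge\tfrac14(1-s)\ge\tfrac14 s(1-s)$ when $s>t$. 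Integrating gives $\min_{t\in[1/4,3/4]}T_1(u,v)(t)\ge\tfrac14\|T_1(u,v)\|_\infty$. Compactness of $T$ is routine: $k_1$ and $k_2$ are continuous and bounded and $f,g$ are continuous, so $T$ carries bounded sets into equibounded, equicontinuous families, and Arzel\`a--Ascoli applies.

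The core of the proof is verifying the two Birkhoff--Kellogg conditions of Theorem~\ref{thmsys-kb}. For $T_1$: if $u\in K$ with $\|u\|_\infty=r_1$ and $\|v\|_\infty\le r_2$, then $u(s)\in[r_1/4,r_1]$ and $v(s)\in[-r_2,r_2]$ for every $s\in[1/4,3/4]$, so hypothesis $a)$ gives $f(s,u(s),v(s))\ge\underline f(s)$ there; evaluating $T_1(u,v)$ at $t\in[1/4,3/4]$ and discarding the nonnegative contribution from $[0,1]\setminus[1/4,3/4]$ yields $\|T_1(u,v)\|_\infty\ge\sup_{t\in[1/4,3/4]}\int_{1/4}^{3/4}k_1(t,s)\underline f(s)\,ds>0$, a bound independent of $(u,v)$. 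For $T_2$ the obstacle is that $k_2$ changes sign, so $g$ cannot be minorized by $\underline g$ inside the integral for a generic evaluation point; the remedy is to evaluate at $t=1$, where $k_2(1,s)\equiv-\tfrac12$ for all $s$. Then, for $u\in K$ with $\|u\|_\infty\le r_1$ and $\|v\|_\infty=r_2$ (hence $u(s)\in[0,r_1]$, $v(s)\in[-r_2,r_2]$), hypothesis $b)$ gives $|T_2(u,v)(1)|=\tfrac12\int_0^1 g(s,u(s),v(s))\,ds\ge\tfrac12\int_0^1\underline g(s)\,ds>0$, again uniformly in $(u,v)$. Thus $\inf_{\|u\|=r_1,\|v\|\le r_2}\|T_1(u,v)\|_\infty>0$ and $\inf_{\|u\|\le r_1,\|v\|=r_2}\|T_2(u,v)\|_\infty>0$, and Theorem~\ref{thmsys-kb} delivers the two pairs $(u_{0,j},v_{0,j})$, $j=1,2$, with $\|u_{0,j}\|=r_1$, $\|v_{0,j}\|=r_2$, the eigenvalues $\lambda_{1,1},\lambda_{2,1},\lambda_{1,2}>0$, $\lambda_{2,2}<0$, and, by construction, solving \eqref{eq_Ham}.

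I expect the sign-changing kernel $k_2$ to be the main obstacle: the standard cone-type lower bound for $\|T_2\|_\infty$ is unavailable, and the argument succeeds only because $k_2$ has constant sign at the endpoint $t=1$, which is exactly where a uniform positive lower bound can be harvested from hypothesis $b)$.
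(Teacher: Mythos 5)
Your proof is correct and follows essentially the same route as the paper: the same Hammerstein operators on $\overline{K}_{r_1}\times\overline{B}_{r_2}$, the same uniform lower bound for $T_1$ via hypothesis $a)$ on $[1/4,3/4]$, the same evaluation of $T_2$ at $t=1$ (where $k_2(1,s)\equiv-\tfrac12$) to handle the sign-changing kernel, and the same final application of Theorem~\ref{thmsys-kb}. Your explicit verification that $T_1$ maps into the cone $K$ (via $k_1(t,s)\ge\tfrac14 s(1-s)$ for $t\in[1/4,3/4]$) is a welcome detail that the paper leaves implicit.
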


\begin{proof}
Let us consider the Banach spaces  $X_1=X_2=C[0,1]$ and the cone $K$ as in~\eqref{Guo-cone}.
Note that the operator
$$T=(T_1,T_2):\overline{K}_{r_1}\times \overline{B}_{r_2}\longrightarrow K\times C[0,1],$$
defined by
\begin{equation*}\label{eq_Ham0}
	\begin{array}{r}
		T_1(u,v)(t):=\displaystyle\int_{0}^{1} k_1(t,s)\,f(s,u(s),v(s))\,ds, \\[0.3cm]
		T_2(u,v)(t):=\displaystyle\int_{0}^{1} k_2(t,s)\,g(s,u(s),v(s))\,ds,
	\end{array}
\end{equation*}
is compact, as a consequence of the classical Arzel\`a--Ascoli Theorem. 

Firstly, let us take $(u,v)\in K\times C[0,1]$ such that $\left\|u\right\|_{\infty}=r_1, \left\|v\right\|_{\infty}= r_2$. Note that for every $t\in[0,1]$ we have $$u(t)\geq 
\Bigl(1-\frac{t}{2}\Bigr) r_1\geq \frac{r_1}{2}.$$ Therefore, we have
$$
\|T_1(u,v)\|_{\infty} \geq T_1(u,v)(t)= \displaystyle\int_{0}^{1} k_1(t,s)\,f(s,u(s),v(s))\,ds\geq \int_{0}^{1}k_1(t,s)\, \underline{f}(s)\,ds,
$$
for every $t\in[0,1]$.
Thus we obtain
\begin{equation}\label{RHS}
\|T_1(u,v)\|_{\infty} \geq \sup_{t \in [0,1]}  \displaystyle \int_{0}^{1}k_1(t,s) \, \underline{f}(s)\,ds.
\end{equation}
Note that the RHS of \eqref{RHS} does not depend on the particular $(u,v)$ chosen. Therefore we  obtain
$$
\inf_{\left\|u\right\|_{\infty}=r_1,\  \left\|v\right\|_{\infty}= r_2}\left\|T_1(u,v)\right\|_{\infty} \geq \sup_{t \in [0,1]}  \displaystyle \int_{0}^{1}k_1(t,s)\, \underline{f}(s)\,ds >0.
$$

Secondly, let us take $(u,v)\in K\times C[0,1]$ such that $\left\|u\right\|_{\infty}= r_1, \left\|v\right\|_{\infty}= r_2$. 
Note that
\begin{equation}\label{RHS2}
\begin{array}{l}
\|T_2(u,v)\|_{\infty} \geq  |T_2(u,v)(1)|\\ 
\phantom{\|T_2(u,v)\|_{\infty} \geq}= \displaystyle\int_{0}^{1} -k_2(1,s)\,g(s,u(s),v(s))\,ds \geq \displaystyle\int_{0}^{1} -k_2(1,s)\,\underline{g}(s)\,ds.
\end{array}
\end{equation}
Note that the RHS of \eqref{RHS2} does not depend on the particular $(u,v)$ chosen. Therefore we  obtain
$$
\inf_{\left\|u\right\|_{\infty}= r_1,\  \left\|v\right\|_{\infty}= r_2}\left\|T_2(u,v)\right\|_{\infty} \geq \displaystyle\int_{0}^{1} -k_2(1,s)\,\underline{g}(s)\,ds= \frac{1}{2}\displaystyle\int_{0}^{1} \underline{g}(s)\,ds>0.
$$
Then a direct application of Theorem~\ref{thmsys-kb} yields the result.
\end{proof}
In the following example we show the applicability of Theorem~\ref{th_ODE_exis}.
\begin{example}
Consider the system
\begin{equation*}\label{syst_ode_example}
\left\{	\begin{array}{l}
		u''(t)+\lambda_1 t(u^2+v^2)=0, \ t\in[0,1], \\
		v''(t)+\lambda_2 te^{uv}=0, \ t\in[0,1],\\
	       u'(0)=u(1)+u'(1)=0=v'(0)=v(1)-\frac12 v'(1).
	\end{array}
\right.
\end{equation*}
Now fix $r_1, r_2>0$, then the choice of $\underline{f}(t)= \dfrac{t \, r^2_1}{4}$ gives, by direct calculation,
$$\sup_{t \in [0,1]}  \displaystyle \int_{0}^{1}k_1(t,s) \frac{t \, r^2_1}{4}\,ds =\frac{r^2_1}{4}>0,$$
while choosing 
$\underline{g}(t)= te^{-r_1r_2}$ yields
$$
\displaystyle\int_{0}^{1}  te^{-r_1r_2}\,dt=\frac{e^{-r_1r_2}}{2}>0.
$$
Then the conditions $a)$ and $b)$  of Theorem~\ref{th_ODE_exis} are satisfied. Furthermore
since the pair $(r_1, r_2)$ can be chosen arbitrarily in $(0,+\infty)\times (0,+\infty)$,
 we obtain the existence of two distinct families of uncountably many pairs: both
$(\lambda_{1,1},\lambda_{2,1})$, with $\lambda_{i,1}>0$, $i=1,2$,
and
$(\lambda_{1,2},\lambda_{2,2})$, with $\lambda_{1,2}>0$ and $\lambda_{2,2}<0$,
each of them with the associated eigenfunctions of prescribed norms.
\end{example}

\section*{Acknowledgements}
The authors would like to thank the Referee the careful reading of the manuscript and the constructive comments.
A. Calamai and G.~Infante are members of the Gruppo Nazionale per l'Analisi Matematica, la Probabilit\`a e le loro Applicazioni (GNAMPA) of the Istituto Nazionale di Alta Matematica (INdAM).
G.~Infante is a member of the UMI Group TAA  and the ``The Research ITalian network on Approximation (RITA)''.

\section*{Funding}
J. Rodr\'iguez--L\'opez has been partially supported by the VIS Program of the University of Calabria, by Ministerio de Ciencia y Tecnología (Spain), AEI and Feder, grant PID2020-113275GB-I00, and by Xunta de Galicia, grant ED431C 2023/12.
This study was partly funded by: Research project of MIUR (Italian Ministry of Education, University and Research) Prin 2022 “Nonlinear differential problems with applications to real phenomena” (Grant Number: 2022ZXZTN2). 

\section*{Conflicts of interest}
The authors declare no conflict of interest.

\section*{Contribution statement}
All authors contributed equally to this manuscript.

\end{document}